\theoremstyle{plain} 
\newtheorem{theorem}{\noindent\sc Theorem}[section]
\newtheorem{lemma}[theorem]{\noindent\sc Lemma}
\newtheorem{proposition}[theorem]{\noindent\sc Proposition}
\theoremstyle{definition}
\renewcommand\section{\@startsection{section}{1}{\z@}%
                                  {-3.5ex \@plus -1ex \@minus -.2ex}%
                                  {2.3ex \@plus.2ex}%
                                  {\normalfont\S\large\bfseries}}
\title{\large \uppercase {Prolongations of Lie algebra Representations}} 
\author{\small \bf{* H\"{u}lya Kad{\i}o\u{g}lu} \\ \small Department of Mathematics, Gazi University, Turkey\\ \small and Department of Mathematics, Idaho State University, USA \\ \small kayahuly@isu.edu and hulyakaya@gazi.edu.tr.\\ \small \bf{Erdo\u{g}an Esin}  \\ \small Department of Mathematics, Gazi University, Turkey\\ \small eresin@gazi.edu.tr. \\ \small \bf{Yusuf Yayl{\i}}\\ \small Department of Mathematics, Ankara University, Turkey.\\ \small Yusuf.Yayli@science.ankara.edu.tr}
\date{}
\begin{document}
   \maketitle
   \footnote{*Corresponding Author\\ 
2000 \textit{Mathematics Subject Classification:}
Primary 58A05; Secondary 22E60.

%\footnote{
\textit{Key words and phrases:} 
Prolongation, Representation, Lie algebra
}
   \begin{abstract}
 In this paper, we present a study on the prolongations of representations of Lie algebras. We show that a tangent bundle of a given Lie algebra attains a Lie algebra structure. Then, we prove that this tangent bundle is algebraically isomorphic to the Lie algebra of a tangent bundle of a Lie group. Using these, we define prolongations of representations of Lie algebras. We show that if a Lie algebra representation corresponds to a Lie group representation, then prolongation of Lie algebra representation corresponds to the prolonged Lie group representation. 
%\newline

%\noindent \textbf{Key Words and Phrases:} Prolongation, Representation, Lie algebra
\end{abstract}
 \section{Introduction}

\hspace{5mm} In this paper, we present a study on the prolongations of representations of Lie algebras. In our previous work \cite{Myarticle}, we have obtained a basis for the tangent bundle of an arbitrary finite-dimensional vector space and shown that if a function is linear, then its tangent function is also linear. We have also  defined prolongations of finite-dimensional real representations of Lie groups and obtained faithful representations on tangent bundles of  Lie groups. The remaining part of the paper was dedicated to the study of the properties of these faithful representations.%We have shown that tangent functions of  Lie group actions correspond to prolonged representations. We have proven that if two representations are equivalent then their prolongations are also equivalent. We have shown that if $U$ is an invariant subspace for a representation, then $TU$ is an invariant subspace for prolongation of given representation and vice versa. We have also proven that if prolongation of a representation $\Phi$ is an irreducible representation, then $\Phi$ is also an irreducible representation. Finally we have shown that prolongations of direct sum of two representations are direct sum of prolongations of them.

   For any Lie group $G$, there is a natural action called adjoint action of $G$ on its Lie algebra $Lie(G)$. Therefore it is possible to form a semi direct product of $G$ with $Lie(G)$ and this semi direct product group is isomorphic to $TG$. Given a representation $\Phi$  of a Lie group $G$ acting on a vector space $V$, it is well known that there exists a naturally associated Lie algebra representation $\phi$ of $Lie(G)$, obtained by taking the differential of $\Phi$ at the identity \cite{Arv,Varadajan}. Therefore it is always possible to obtain a Lie algebra representation of $Lie(TG)$ by using prolongation of Lie group representations defined in our previous paper. In this study, we introduce a different and direct way to find a Lie algebra representation on $Lie(TG)$. First we show that the tangent bundle of a Lie algebra is also Lie algebra. Then, we prove that this tangent bundle is algebraically isomorphic to $Lie(TG)$ where $G$ represents a Lie group, $TG$ represents $G$'s tangent bundle, and $Lie(TG)$ represents the Lie algebra of $TG$. Using these, we define prolongations of representations of Lie algebras.  We show that if a Lie algebra representation corresponds to a Lie group representation, then prolongation of Lie algebra representation corresponds to the prolonged Lie group representation.

 \section{Lie algebra Structure on the Tangent Bundle of a Lie Algebra}

\hspace{5mm} In this section, we present our original findings. Necessary details are given in the appendix. 

\begin{proposition}\label{newprop2}  $(T(Lie(G)),\oplus,\bullet,T\varphi )$ is a Lie algebra, where $\oplus$ and $\bullet$ is sum and scalar product-defined in \cite{Morimoto:1968}- of the vector space $T(Lie(G))$ respectively.
\end{proposition}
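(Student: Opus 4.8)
The plan is to use the concrete description of $T(Lie(G))$ furnished by our previous work: since $Lie(G)$ is a finite-dimensional real vector space, its tangent bundle is trivial and, in the basis obtained in \cite{Myarticle}, every element of $T(Lie(G))$ is represented by a pair $(X,\dot X)$ with $X,\dot X\in Lie(G)$, the operations $\oplus$ and $\bullet$ of \cite{Morimoto:1968} acting componentwise so that $(T(Lie(G)),\oplus,\bullet)$ is a real vector space isomorphic to $Lie(G)\times Lie(G)$. Under the canonical identification $T(Lie(G)\times Lie(G))\cong T(Lie(G))\times T(Lie(G))$, the tangent map $T\varphi$ of the Lie bracket $\varphi$ becomes a map $T(Lie(G))\times T(Lie(G))\to T(Lie(G))$, and my first task is to write it down explicitly. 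Differentiating $\varphi$ along the linear curves $t\mapsto X_i+t\dot X_i$ and using the bilinearity of $\varphi$, I expect to obtain
\begin{equation*}
T\varphi\big((X_1,\dot X_1),(X_2,\dot X_2)\big)=\big(\varphi(X_1,X_2),\,\varphi(X_1,\dot X_2)+\varphi(\dot X_1,X_2)\big),
\end{equation*}
which is the candidate bracket to be tested against the three Lie algebra axioms.

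I would then verify bilinearity, skew-symmetry, and the Jacobi identity in turn. Bilinearity of $T\varphi$ with respect to $\oplus$ and $\bullet$ follows either by reading it off the explicit formula above — each component is bilinear in $X_i,\dot X_i$ because $\varphi$ is — or, more structurally, from the result of \cite{Myarticle} that the tangent map of a linear map is again linear, applied to the partial maps $\varphi(X,\,\cdot\,)$ and $\varphi(\,\cdot\,,Y)$. Skew-symmetry is immediate: setting $(X_1,\dot X_1)=(X_2,\dot X_2)=(X,\dot X)$ yields first component $\varphi(X,X)=0$ and second component $\varphi(X,\dot X)+\varphi(\dot X,X)=0$, both vanishing because $\varphi$ is alternating.

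The substantive step is the Jacobi identity, and this is where I expect the only genuine bookkeeping. Writing $\xi_i=(X_i,\dot X_i)$ and forming the cyclic sum $T\varphi(\xi_1,T\varphi(\xi_2,\xi_3))\oplus T\varphi(\xi_2,T\varphi(\xi_3,\xi_1))\oplus T\varphi(\xi_3,T\varphi(\xi_1,\xi_2))$, the base (first) component is exactly the cyclic sum $\varphi(X_1,\varphi(X_2,X_3))+\varphi(X_2,\varphi(X_3,X_1))+\varphi(X_3,\varphi(X_1,X_2))$, which vanishes by the Jacobi identity in $Lie(G)$. For the velocity (second) component one collects the nine resulting terms and sorts them according to which derivative $\dot X_1,\dot X_2,\dot X_3$ they contain; each of the three groups is then precisely a cyclic triple to which the Jacobi identity of $Lie(G)$ applies, so all three groups vanish separately. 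The main obstacle is therefore organizational — keeping the cyclic terms correctly sorted — rather than conceptual.

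Finally, as a conceptual check I would note that the statement also follows functorially: the tangent functor is product preserving, so it carries the commutative diagrams encoding bilinearity, the alternating property, and the Jacobi identity for $(Lie(G),\varphi)$ to the corresponding diagrams for $(T(Lie(G)),T\varphi)$; since $T(Lie(G))$ is itself a vector space under $\oplus,\bullet$, this exhibits it as a genuine Lie algebra. The explicit computation above is the concrete realization of this principle and is what I would actually record, relegating the verification of the vector-space identities for $\oplus$ and $\bullet$ to the appendix as indicated.
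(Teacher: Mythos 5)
Your proposal is correct, and its key computations check out: under the trivialization $T(Lie(G))\cong Lie(G)\times Lie(G)$ the operations $\oplus,\bullet$ of \cite{Morimoto:1968} are indeed componentwise, the tangent map of the bilinear $\varphi$ is $T\varphi\bigl((X_1,\dot X_1),(X_2,\dot X_2)\bigr)=\bigl(\varphi(X_1,X_2),\,\varphi(X_1,\dot X_2)+\varphi(\dot X_1,X_2)\bigr)$, and in the velocity component of the cyclic Jacobi sum the nine terms really do split into three cyclic triples according to which $\dot X_i$ they contain, each of which vanishes by the Jacobi identity of $Lie(G)$. This is, however, a genuinely different route from the paper's. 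The paper never writes an explicit formula for $T\varphi$ in this proof; it decomposes $T\varphi(X_a,Y_b)=T\varphi(T\bar{f}_b(X_a)+Tf_a(Y_b))$ via the partial maps $f_a,\bar{f}_a$, and then verifies antisymmetry, bilinearity, and the Jacobi identity by pushing compositional identities among $\varphi$, the dilations $\sigma_\lambda$, and the translations $\tau_v$ (Proposition \ref{newprop1} of the appendix, Eqs. (\ref{eq1})--(\ref{eq9})) through the tangent functor, with $\oplus$ and $\bullet$ entering only through their definitions $X_a\oplus Y_b=T\tau_b(X_a)+T\tau_a(Y_b)$ and $\lambda\bullet X_a=T\sigma_\lambda(X_a)$. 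Your approach buys concreteness and brevity: all three axioms reduce transparently to the corresponding axioms for $\varphi$, and the explicit bracket (the semidirect-product, or dual-numbers, bracket on $Lie(G)\times Lie(G)$) makes the isomorphism with $Lie(TG)$ in Proposition \ref{newprop4} nearly immediate --- indeed your formula agrees with the coordinate expression (\ref{hkk12}) that the paper itself derives there, so nothing is circular. What the paper's route buys is that it stays entirely within Morimoto's coordinate-free formalism, never choosing a trivialization of $T(Lie(G))$: every step is a formal chain-rule manipulation of tangent maps, at the cost of the heavier appendix machinery.
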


\begin{proof} Showing that $T(\varphi)$, where $\varphi$ is the Lie bracket operator, satisfies bilinearity, antisymmetry and Jacobi identity, we complete the proof. Here, $f_a:Lie(G)\to Lie(G)\times Lie(G)$ defined as $f_a(b)=(a,b)$ and $\bar{f}_a:Lie(G)\to Lie(G)$ defined as $\bar{f}_a(b)=(b,a)$ where $a,b\in Lie(G)$. Using (\ref{eq1}) and (\ref{eq2}) for all $\ (X_a,Y_b)\in T(Lie(G))\times T(Lie(G))$, we have
\begin{eqnarray}
T\varphi(X_a,Y_b)&=&T\varphi(T\bar{f}_b(X_a)+Tf_a(Y_b))\nonumber \\
&=&T(\varphi \circ \bar{f}_b)(X_a)+T(\varphi \circ f_a)(Y_b)\nonumber \\
&=&T(\sigma_{-1} \circ \varphi \circ f_b)(X_a)+T(\sigma_{-1} \circ \varphi \circ \bar{f}_a)(Y_b)\nonumber \\
%&=&T\sigma_{-1}(T\varphi(T\bar{f}_a(Y_b)+Tf_b(X_a)))\nonumber \\
&=&T\sigma_{-1}(T\varphi(Y_b,X_a))\nonumber \\
&=&-1\bullet T\varphi(Y_b,X_a).\label{hkk3}
\end{eqnarray}
(\ref{hkk3}) shows that $T\varphi$ is antisymmetric. For all $X_a,Y_b,Z_c \in T(Lie(G))$ and $\lambda \in \mathbb{R}$, Eqs. (\ref{eq3}) and (\ref{eq4}) give
\begin{eqnarray}
T\varphi(\lambda \bullet X_a,Y_b)&=&T\varphi(T\bar{f}_b(\lambda \bullet X_a)+Tf_{\lambda a})(Y_b)\nonumber \\
&=&T(\varphi \circ \bar{f}_b \circ \sigma_{\lambda})(X_a)+T(\varphi \circ f_{\lambda a})(Y_b)\nonumber\\
&=&T(\sigma_{\lambda} \circ \varphi \circ \bar{f}_b)(X_a)+T(\sigma_{\lambda} \circ \varphi \circ f_a)(Y_b)\nonumber\\
&=& \lambda \bullet T\varphi(X_a,Y_b).\label{hkk4}
\end{eqnarray}
Using Eqs. (\ref{eq5}) and (\ref{eq6}), we have
\begin{eqnarray}
T\varphi(X_a,Z_c)\oplus T\varphi(Y_b,Z_c)&=&T\tau_{\varphi(b,c)}(T\varphi(X_a,Z_c))+T\tau_{\varphi(a,c)}(T\varphi(Y_b,Z_c))\nonumber \\
&=& T(\tau_{\varphi(b,c)} \circ \varphi \circ \bar{f}_c)(X_a)+T(\tau_{\varphi(a,c)} \circ \varphi \circ \bar{f}_c)(Y_b)\nonumber \\
&+&T(\tau_{\varphi(b,c)}\circ \varphi \circ f_a +\tau_{\varphi(a,c)}\circ \varphi \circ f_b)(Z_c)\nonumber \\
&=&T(\varphi \circ \bar{f}_c \circ \tau_b)(X_a)+T(\varphi \circ \bar{f}_c \circ \tau_a)(Y_b)\nonumber \\
&+& T(\tau_{\varphi(b,c)}\circ \varphi \circ f_a)(Z_c)+T(\tau_{\varphi(a,c)}\circ \varphi \circ f_b)(Z_c)\nonumber \\
%&=& T(\varphi \circ \bar{f}_c)((T \tau_b)(X_a)+(T\tau_a)(Y_b))\nonumber \\
%&+&(T(\varphi \circ f_a)(Z_c) \oplus T(\varphi \circ f_b)(Z_c))\nonumber \\
&=&T\varphi(T\bar{f}_c(X_a\oplus Y_b)+Tf_{a+b}(Z_c))\nonumber \\
&=& T\varphi(X_a \oplus Y_b,Z_c).\label{hkk5}
\end{eqnarray}
(\ref{hkk4}) and (\ref{hkk5}) show that $T\varphi $ is a bilinear function. Finally, using (\ref{eq7}), (\ref{eq8}), and (\ref{eq9}) we have 
\begin{eqnarray}
&&T\varphi(T\varphi(X_a,Y_b),Z_c) \oplus T\varphi(T\varphi(Y_b,Z_c),X_a)\nonumber \\
&=&(T(\varphi \circ \bar{f}_c \circ \varphi)(X_a,Y_b)+T(\varphi \circ f_{\varphi(a+b)})(Z_c)))\nonumber \\
& \oplus &(T(\varphi \circ \bar{f}_a \circ \varphi)(Y_b,Z_c)+ T(\varphi \circ f_{\varphi (b,c)})(X_a))\nonumber \\
&=&(T(\tau_{\varphi(\varphi(b,c),a)}\circ \varphi \circ \bar{f}_c \circ \varphi \circ \bar{f}_b+\tau_{\varphi(\varphi(a,b),c)} \circ \varphi \circ f_{\varphi(b,c)})(X_a)\nonumber \\
&+& T(\tau_{\varphi(\varphi(b,c),a)}\circ \varphi \circ \bar{f}_c \circ \varphi \circ f_a+\tau_{\varphi(\varphi(a,b),c)}\circ \varphi \circ \bar{f}_a \circ \varphi \circ \bar{f}_c)(Y_b)\nonumber \\
&+&(T(\tau_{\varphi(\varphi(b,c),a)} \circ \varphi \circ f_{\varphi(a,b)}+\tau_{\varphi(\varphi(a,b),c)} \circ \varphi \circ \bar{f}_a \circ \varphi \circ f_b)(Z_c)\nonumber \\
&=&T(\sigma_{-1} \circ \varphi \circ \bar{f}_b \circ \varphi \circ f_c)(X_a)+T(\sigma_{-1} \circ  \varphi \circ \bar{f}_{\varphi(c,a)})(Y_b)\nonumber \\
&+&T(\sigma_{-1} \circ \varphi \circ \bar{f}_b \circ \varphi \circ \bar{f}_a)(Z_c)\nonumber \\
%&=&(T\sigma_{-1} \circ T\varphi)((T\bar{f}_b(T\varphi(Z_c,X_a)+T\bar{f}_{\varphi(c,a)}(Y_b))\nonumber \\
&=&T\sigma_{-1}(T\varphi(T\varphi(Z_c,X_a),Y_b))\nonumber \\
&=& -1\bullet T(\varphi(T\varphi(Z_c,X_a),Y_b)).\label{hkk6}
\end{eqnarray}
Eq. (\ref{hkk6}) indicates that the Jacobi identity is satisfied. Therefore (\ref{hkk3}), (\ref{hkk4}), (\ref{hkk5}), and (\ref{hkk6}) imply that $(T(Lie(G)),\oplus,\bullet,T\varphi )$ is a Lie algebra. 
\end{proof}

Since both $T(Lie(G))$ and $Lie(TG)$ have a Lie algebra structure of dimension $2m$, there exist the following relationship 
$\Omega:T(Lie(G)) \to Lie(TG)\label{hkk10}$ given by

$$\Omega(\bar{X},\bar{V})=\displaystyle\sum_{i=1}^m \bar{a}_i X_{i}^{c}+\bar{v}_i X_{i}^{v}\label{gamma}$$ where $\bar{X}= \displaystyle\sum_{i=1}^m \bar{a}_i X_i\in Lie(G)$, and $\bar{V}=\displaystyle\sum_{i=1}^m \bar{v}_i e_i\in \mathbb{R}^m.$

\begin{proposition}\label{newprop4} $\ T(Lie(G))$ is algebraically isomorphic to $\ (Lie(TG),[,])$.
\end{proposition}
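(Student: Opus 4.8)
The plan is to prove that the linear map $\Omega$ displayed above is a Lie algebra isomorphism from $(T(Lie(G)),\oplus,\bullet,T\varphi)$ onto $(Lie(TG),[,])$. First I would note that $\Omega$ is already a linear isomorphism of the underlying $2m$-dimensional vector spaces: by its very definition it sends the basis $\{(X_i,0),(0,e_i)\}_{i=1}^m$ of $T(Lie(G))$ bijectively onto the set $\{X_i^c,X_i^v\}_{i=1}^m$, and the latter is a basis of $Lie(TG)$ because the complete and vertical lifts of a basis of left-invariant vector fields on $G$ constitute a basis of the left-invariant vector fields on $TG$. Consequently the only thing left to establish is compatibility with the brackets, i.e. $\Omega(T\varphi(\Xi,\Theta))=[\Omega(\Xi),\Omega(\Theta)]$ for all $\Xi,\Theta\in T(Lie(G))$.

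Since $T\varphi$ is bilinear by Proposition \ref{newprop2}, the bracket $[,]$ is bilinear, and $\Omega$ is linear, the assignment $(\Xi,\Theta)\mapsto \Omega(T\varphi(\Xi,\Theta))-[\Omega(\Xi),\Omega(\Theta)]$ is bilinear, so it vanishes identically as soon as it vanishes on the basis pairs. Checking the basis pairs needs two ingredients. On the domain side, the tangent lift of the bilinear operator $\varphi$ obeys the Leibniz rule $T\varphi((a,u),(b,w))=(\varphi(a,b),\varphi(u,b)+\varphi(a,w))$; writing $\varphi(X_i,X_j)=\sum_k c_{ij}^k X_k$ this yields $T\varphi((X_i,0),(X_j,0))=(\sum_k c_{ij}^k X_k,0)$, $T\varphi((X_i,0),(0,e_j))=(0,\sum_k c_{ij}^k e_k)$, and $T\varphi((0,e_i),(0,e_j))=0$. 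On the target side, the bracket relations for lifts give $[X_i^c,X_j^c]=\sum_k c_{ij}^k X_k^c$, $[X_i^c,X_j^v]=\sum_k c_{ij}^k X_k^v$, and $[X_i^v,X_j^v]=0$. Applying $\Omega$ to the three domain computations and comparing with the three target brackets gives a term-by-term agreement in all three cases, which is exactly the required bracket compatibility.

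I expect the substance of the argument to lie not in any single hard estimate but in pinning down these two bracket tables and checking that they are matched by the correspondence base $\leftrightarrow$ complete lift, fiber $\leftrightarrow$ vertical lift; one must also confirm that the operations $\oplus$ and $\bullet$ taken from \cite{Morimoto:1968} agree with the componentwise vector space structure on $Lie(G)\times Lie(G)$, so that ``bilinear'' has its ordinary meaning. A useful independent check is the semidirect-product description mentioned in the introduction: $TG\cong G\ltimes Lie(G)$ gives $Lie(TG)\cong Lie(G)\ltimes Lie(G)$ with bracket $[(a,u),(b,w)]=([a,b],[a,w]-[b,u])$, and since $[a,w]-[b,u]=\varphi(u,b)+\varphi(a,w)$ this coincides with the Leibniz formula for $T\varphi$; hence $\Omega$ intertwines the two brackets, confirming the isomorphism.
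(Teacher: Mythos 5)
Your proof is correct, and its skeleton matches the paper's: both identify $\Omega$ as a linear isomorphism by its action on the basis, $(X_i,0)\mapsto X_i^{c}$ and $(0,e_i)\mapsto X_i^{v}$, and then verify bracket compatibility against the standard lift relations $[X_i^{c},X_j^{c}]=[X_i,X_j]^{c}$, $[X_i^{c},X_j^{v}]=[X_i,X_j]^{v}$, $[X_i^{v},X_j^{v}]=0$. Where you genuinely differ is in how $T\varphi$ is evaluated. The paper computes $T\varphi((\bar{X},\bar{V}),(\bar{Y},\bar{W}))$ on two \emph{arbitrary} elements in coordinates, using the structure-constant computations of Lemma \ref{newlem3} to obtain $T\varphi((\bar{X},\bar{V}),(\bar{Y},\bar{W}))=\bigl(\sum \bar{a}_i\bar{b}_j[X_i,X_j],\ \sum(\bar{w}_j\bar{a}_i-\bar{v}_j\bar{b}_i)C_{ij}^{k}e_k\bigr)$, and only then applies $\Omega$. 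You instead invoke the bilinearity of $T\varphi$ established in Proposition \ref{newprop2} to reduce everything to basis pairs, and compute those pairs by the Leibniz rule $T\varphi((a,u),(b,w))=(\varphi(a,b),\varphi(u,b)+\varphi(a,w))$ for the tangent of a bilinear map; this coordinate-free formula is precisely what the paper's coordinate computation produces (expand it on $\bar{X}=\sum\bar{a}_iX_i$, etc., and the two expressions coincide), so the arguments meet at the same bracket table. Your route buys brevity and transparency --- it dispenses with the auxiliary coordinate lemma, and the fourth basis case $((0,e_i),(X_j,0))$ follows from the antisymmetry already proved in Proposition \ref{newprop2} --- while the paper's route verifies everything from first principles inside Morimoto's formalism. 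Your semidirect-product cross-check, $Lie(TG)\cong Lie(G)\ltimes Lie(G)$ with $[(a,u),(b,w)]=([a,b],[a,w]-[b,u])$, which agrees with the Leibniz formula by antisymmetry, is a genuine addition not in the paper and connects the proposition back to the remark in the introduction. The two facts you flag as needing confirmation --- that $\{X_i^{c},X_i^{v}\}$ is a basis of $Lie(TG)$, and that Morimoto's $\oplus$, $\bullet$ agree with the componentwise linear structure on $T(Lie(G))\cong Lie(G)\times Lie(G)$ --- are indeed assumed without comment by the paper as well (the first is implicit in the very definition of $\Omega$), so neither constitutes a gap relative to the paper's own standard of rigor.
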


\begin{proof} For simplicity, we represent $\{\widetilde{X_i},\widetilde{y_i}\}$ as a basis for $\ T(Lie(G))$ where $\widetilde{X_i}=(X_i,0)$ and $\widetilde{y_i}=(0,e_i)$ \cite{Myarticle}. Since $ X_i=\displaystyle\sum \delta_{j}^{i} X_j$, using (\ref{hkk10}) we have
\begin{eqnarray}
\Omega(\widetilde{X_i})&=&\Omega((\displaystyle\sum_{j=1}^m \delta_{j}^{i} X_j),0)=\displaystyle\sum_{j}^m \delta_{j}^{i} X_{j}^{c}= X_{i}^{c}\label{xi}
\end{eqnarray}
and since $\ e_i=\displaystyle\sum \delta_{j}^{i} e_j$, we have
\begin{eqnarray}
\Omega(\widetilde{y_i})&=&\Omega(0,\displaystyle\sum_{j=1}^m \delta_{j}^{i} e_j)=\displaystyle\sum_{j}^m \delta_{j}^{i} X_{j}^{v}= X_{i}^{v}\label{vi}.
\end{eqnarray}
Eqs. (\ref{xi}) and (\ref{vi}) shows that $\Omega$ is a linear isomorphism. Showing that $\Omega$ is a Lie algebra homomorphism completes the proof. For that, we take two arbitrary elements $ (\bar{X},\bar{V})$, $(\bar{Y},\bar{W})\in T(Lie(G))$ where $\bar{X}=\displaystyle\sum_{i=1}^m a_i X_i$, $\bar{Y}=\displaystyle\sum_{i=1}^m b_i X_i$ and $\bar{V}=\displaystyle\sum_{i=1}^m v_i e_i$, $\bar{W}=\displaystyle\sum_{i=1}^m w_i e_i$. Then, we have
\begin{eqnarray}
T\varphi((\bar{X},\bar{V}),(\bar{Y},\bar{W}))[x_k]&=&(\bar{Y},\bar{W})[x_k \circ \varphi \circ f_{\bar{X}}]+(\bar{X},\bar{V})[x_k \circ \varphi \circ \bar{f}_{\bar{Y}}]\nonumber \\
&=& \displaystyle\sum_{i,j=1}^m (\bar{w}_i \frac{\partial(x_k \circ \varphi \circ f_{\bar{X}})}{\partial x_j}|_{\bar{Y}}+\bar{v}_i \frac{\partial(x_k \circ \varphi \circ \bar{f}_{\bar{Y}})}{\partial x_j}|_{\bar{X}})\nonumber \\
&=& \displaystyle\sum_{i,j=1}^m (\bar{w}_j \bar{a}_i C_{ij}^{k}+\bar{v}_j \bar{b}_i C_{ji}^k)\nonumber \\
&=& \displaystyle\sum_{i,j=1}^m (\bar{w}_j \bar{a}_i-\bar{v}_j \bar{b}_i) C_{ij}^k.\label{hkk11}
\end{eqnarray}
(\ref{hkk11}) implies that 
\begin{equation}
T\varphi((\bar{X},\bar{V}),(\bar{Y},\bar{W}))=(\displaystyle\sum \bar{a}_i\bar{b}_j[X_i,X_j] ,\displaystyle\sum (\bar{w}_j \bar{a}_i -\bar{v}_j \bar{b}_i)C_{ij}^k e_k).\label{hkk12}
\end{equation}
Using Eq. (\ref{hkk12}), we obtain 
\begin{eqnarray}
\Omega(T\varphi((\bar{X},\bar{V}),(\bar{Y},\bar{W})))&=&\displaystyle\sum \bar{a}_i\bar{b}_j C_{ij}^k X_{k}^{c} +\displaystyle\sum (\bar{w}_j \bar{a}_i- \bar{v}_j \bar{b}_i)C_{ij}^k X_{k}^{v}\nonumber \\
&=& \displaystyle\sum \bar{a}_i\bar{b}_j[X_i,X_j]^c+\displaystyle\sum (-\bar{w}_j \bar{a}_i +\bar{v}_j \bar{b}_i)[X_j,X_i]^v\nonumber \\
&=& \displaystyle\sum \bar{a}_i\bar{b}_j[X_i,X_j]^c+\displaystyle\sum (-\bar{w}_j \bar{a}_i +\bar{v}_j \bar{b}_i)[X_j,X_i]^v\nonumber \\
&=& [\Omega(\bar{X},\bar{V}),\Omega(\bar{Y},\bar{W})].\label{hkk13}
\end{eqnarray}
Eq. (\ref{hkk13}) shows that $\Omega$ is a Lie algebra homomorphism, therefore this finishes the proof.
\end{proof}

   So far, we have obtained a Lie algebra structure on $T(Lie(G))$ which is isomorphic to $Lie(TG)$. Now, given a Lie algebra representation $\phi$, we will obtain a new Lie algebra representation of $Lie(TG)$. For this, we first prove the following proposition.
%\section{Prolongations of Representations of Lie Algebras}

%Throughout this section $G$ will be a finite-dimensional real Lie group, $g$ be its Lie algebra, $V$ be a finite-dimensional real vector space and $\phi:g\to End(V)$ be a representation of $g$, i.e. $\phi$ be an algebra homomorphism from $g$ to $End(V)$.

\begin{proposition}\label{newnewprop1}Let $(g,\varphi)$ and $(h,\gamma)$ be two Lie algebras and $F:g\to h$ be a Lie algebra homomorphism. Then $TF$ is a Lie algebra homomorphism.
\end{proposition}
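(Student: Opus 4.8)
The plan is to verify the two defining properties of a Lie algebra homomorphism for $TF$: first that it is linear with respect to the vector space operations $\oplus$ and $\bullet$ on the tangent bundles, and second that it intertwines the brackets $T\varphi$ on $Tg$ and $T\gamma$ on $Th$ furnished by Proposition \ref{newprop2}. Linearity I would dispatch immediately: since a Lie algebra homomorphism is in particular a linear map of vector spaces, the result of \cite{Myarticle} that the tangent of a linear map is again linear shows at once that $TF$ preserves $\oplus$ and $\bullet$.

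The substance lies in bracket preservation, that is, in establishing $TF \circ T\varphi = T\gamma \circ (TF \times TF)$. My starting point would be to record the hypothesis that $F$ is a Lie algebra homomorphism in the point-free form $F \circ \varphi = \gamma \circ (F \times F)$ as maps $g \times g \to h$. Applying the tangent functor to both sides and invoking its functoriality, i.e. the chain rule $T(\psi \circ \chi) = T\psi \circ T\chi$, yields $TF \circ T\varphi = T\gamma \circ T(F \times F)$. It would then remain only to identify $T(F \times F)$ with $TF \times TF$ under the canonical isomorphism $T(g \times g) \cong Tg \times Tg$; granting this, the right-hand side becomes $T\gamma \circ (TF \times TF)$, which is exactly the required identity.

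The hard part will be precisely this compatibility of the tangent functor with products: one must confirm that, under the natural identification $T(g \times g) \cong Tg \times Tg$ used implicitly in the very definition of $T\varphi$ as a bracket on $Tg$, the tangent map $T(F \times F)$ coincides with the product $TF \times TF$. This is a standard fact about tangent maps, but because the whole construction rests on this identification it deserves to be stated carefully rather than taken for granted.

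Should the functorial bookkeeping seem unappealing, I would instead argue in coordinates. Fixing the bases of Proposition \ref{newprop4} and identifying each tangent space of the vector space $g$ with $g$ itself, the map $TF$ takes the explicit form $(\bar X, \bar V) \mapsto (F\bar X, F\bar V)$, while the bracket is given by Eq. (\ref{hkk12}). Substituting these into both $TF(T\varphi(\,\cdot\,,\,\cdot\,))$ and $T\gamma(TF\,\cdot\,, TF\,\cdot\,)$ and then applying the homomorphism identity $F \circ \varphi = \gamma \circ (F \times F)$ together with the linearity of $F$ reduces the claim to a direct comparison of the two resulting expressions, a computation entirely parallel to the one already carried out in Proposition \ref{newprop4}.
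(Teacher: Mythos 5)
Your proposal is correct and follows essentially the same route as the paper: record the hypothesis as $F \circ \varphi = \gamma \circ (F \times F)$, apply the tangent functor and the chain rule to get $TF \circ T\varphi = T\gamma \circ (TF \times TF)$, with linearity of $TF$ handled by citing that the tangent of a linear map is linear. The only difference is that you explicitly flag the identification $T(F \times F) \cong TF \times TF$, a step the paper's proof silently absorbs.
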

\begin{proof} Since $F$ is a Lie algebra homomorphism, it is a linear function. Then $TF$ is a linear function too\cite{Morimoto:1968}. Showing that $TF$ preserves Lie brackets, we complete the proof.

Since $F$ is a Lie algebra homomorphism, $F(\varphi(X,Y))=\gamma(F(X),F(Y))$ for all $X,Y\in g$. This means
$F\circ \varphi =\gamma \circ (F\times F)$. Then we can write $TF\circ T\varphi =T \gamma \circ (TF \times TF)$. This leads to 
\begin{eqnarray}
TF(T\varphi(\bar{X},\bar{Y}))=(TF\circ T\varphi)(\bar{X},\bar{Y})= T\gamma(TF(\bar{X}),TF(\bar{Y}))\label{hkk30}
\end{eqnarray}
where $\bar{X}\in Tg$ and $\bar{Y}\in Tg$. Therefore (\ref{hkk30}) implies that $TF$ preserves Lie brackets. 
\end{proof}

   Let $J_n$ be the one-to-one Lie group homomorphism which is defined in \cite{Morimoto:1968} from $T(GL(n,\mathbb{R}))$ to $GL(2n,\mathbb{R})$ and $\hat{J_n}:T(Aut(V)) \to Aut(TV)$ which is the one-to-one homomorphism, obtained by $J_n$. It is well known that the differential of a Lie group homomorphism at the identity is a Lie algebra homomorphism. Therefore,  $T(\hat{J_n})_{(I,0)}$  is a Lie algebra homomorphism. Since $\Omega$ and $\Omega'$ are Lie algebra isomorphisms of $T(Lie(G))$ and $T(End(V))$ respectively, it can be defined a Lie algebra representation, that is 
%\begin{equation}
$\widetilde{\phi}=(T\hat{J_n})_{(I,0)}\circ \Omega'\circ T\phi \circ \Omega^{-1}.$  \label{prolongation}
%\end{equation}

%\noindent where $\hat{J_n}$ represents a one-to-one Lie group homomorphism from $T(Aut(V))$ to $Aut(TV)$ and represents the differential of $\hat{J_n}$ at $(I,0)$. %(refer to Definition \ref{remark8}).

We note that $\widetilde{\phi}$ in equation (\ref{prolongation}) has the following explicit form 
\begin{equation}
\widetilde{\phi}( \displaystyle\sum_{i=1}^m (a_i X_i^V+b_i X_i^C)=
\begin{pmatrix}
\phi(\displaystyle\sum_{i=1}^m (b_i X_i)) & 0\\
\phi(\displaystyle\sum_{i=1}^m (a_i X_i)) & \phi(\displaystyle\sum_{i=1}^m (b_i X_i))
\end{pmatrix}. \label{proliealg}
\end{equation}

\begin{proposition} If $\widetilde{\Phi}$ is the prolongation of $\Phi$, where $\Phi$ be a Lie group representation with corresponding Lie algebra representation $\phi$, then it can be easily seen that the prolongation of $\phi$ is exactly $\widetilde{\Phi}_{*(e,0)}$ i.e., 
\begin{equation}
\widetilde{\phi}=\widetilde{\Phi}_{*(e,0)}
\end{equation}
where $e$ represents identity element of $G$.
\end{proposition}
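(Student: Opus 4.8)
The plan is to reduce the asserted equality $\widetilde{\phi}=\widetilde{\Phi}_{*(e,0)}$ to the explicit formula (\ref{proliealg}) together with the defining relation $\phi=\Phi_{*e}$ between the group representation and its associated Lie algebra representation. Recall from \cite{Myarticle} that the prolongation of $\Phi$ is the composite $\widetilde{\Phi}=\hat{J_n}\circ T\Phi:TG\to Aut(TV)$, where $T\Phi:TG\to T(Aut(V))$ is the tangent map of $\Phi$. Since $T\Phi$ sends the identity $(e,0)$ of $TG$ to the identity $(I,0)$ of $T(Aut(V))$, the chain rule yields $\widetilde{\Phi}_{*(e,0)}=(T\hat{J_n})_{(I,0)}\circ(T\Phi)_{*(e,0)}$. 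Comparing this with the definition $\widetilde{\phi}=(T\hat{J_n})_{(I,0)}\circ\Omega'\circ T\phi\circ\Omega^{-1}$, it suffices to establish the naturality identity
\[
(T\Phi)_{*(e,0)}=\Omega'\circ T\phi\circ\Omega^{-1}
\]
as linear maps $Lie(TG)\to Lie(T(Aut(V)))$, since equality of these inner maps forces equality of the two composites.

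Because both sides of this identity are linear, I would verify it on the basis $\{X_i^c,X_i^v\}$ of $Lie(TG)$. For the right-hand side, (\ref{xi}) and (\ref{vi}) give $\Omega^{-1}(X_i^c)=(X_i,0)$ and $\Omega^{-1}(X_i^v)=(0,e_i)$; because $\phi$ is linear its tangent map acts componentwise, $T\phi(A,B)=(\phi(A),\phi(B))$, and composing with $\Omega'$ and Morimoto's embedding $J_n$ reproduces precisely the block matrices $\left(\begin{smallmatrix}\phi(X_i)&0\\0&\phi(X_i)\end{smallmatrix}\right)$ and $\left(\begin{smallmatrix}0&0\\\phi(X_i)&0\end{smallmatrix}\right)$ read off from (\ref{proliealg}) for $X_i^C$ and $X_i^V$ (which are the same basis vectors as $X_i^c$ and $X_i^v$). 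For the left-hand side I would represent these two basis vectors by explicit curves in $TG$ through $(e,0)$: the complete lift $X_i^c$ by $t\mapsto(\exp(tX_i),0)$ and the vertical lift $X_i^v$ by $t\mapsto(e,tX_i)$. Writing $\widetilde{\Phi}(g,v)=\left(\begin{smallmatrix}\Phi(g)&0\\ \Phi_{*g}(v)&\Phi(g)\end{smallmatrix}\right)$, differentiating at $t=0$, and using $\phi=\Phi_{*e}$ so that $\frac{d}{dt}\Phi(\exp(tX_i))\big|_{t=0}=\phi(X_i)$, these curves are carried by $\widetilde{\Phi}_{*(e,0)}$ to exactly the same two block matrices. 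Hence $(T\Phi)_{*(e,0)}$ and $\Omega'\circ T\phi\circ\Omega^{-1}$ agree on a basis and therefore coincide, giving $\widetilde{\phi}=\widetilde{\Phi}_{*(e,0)}$.

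I expect the main obstacle to be this middle step, namely confirming that the curves $t\mapsto(\exp(tX_i),0)$ and $t\mapsto(e,tX_i)$ genuinely represent the complete and vertical lifts $X_i^c$ and $X_i^v$ under the isomorphism $\Omega$, and tracking the fibre (velocity) component of $T\Phi$ correctly through $J_n$. Conceptually this is the statement that differentiation commutes with the tangent functor up to the canonical involution of the double tangent bundle $TTG$; the isomorphisms $\Omega$ and $\Omega'$ are precisely what encode this involution, so the commutativity of the naturality square above is where the real content lies, while everything else is the chain rule and a comparison on a basis.
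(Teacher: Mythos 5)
Your proposal is correct, but there is nothing in the paper to compare it against: the proposition is stated with ``it can be easily seen'' and no proof follows, so your argument supplies exactly the verification the paper omits. Measured against the paper's definitions, every step checks out. The factorization $\widetilde{\Phi}_{*(e,0)}=(T\hat{J_n})_{(I,0)}\circ (T\Phi)_{*(e,0)}$ is valid because $\widetilde{\Phi}=\hat{J_n}\circ T\Phi$ in the construction of \cite{Myarticle} and the homomorphism $T\Phi$ carries the identity $(e,0)$ of $TG$ to the identity $(I,0)$ of $T(Aut(V))$, so the claim correctly reduces to the naturality identity $(T\Phi)_{*(e,0)}=\Omega'\circ T\phi\circ\Omega^{-1}$. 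Your basis verification then closes it: by (\ref{xi}) and (\ref{vi}), $\Omega^{-1}(X_i^c)=(X_i,0)$ and $\Omega^{-1}(X_i^v)=(0,e_i)$; linearity of $\phi$ gives $T\phi(A,B)=(\phi(A),\phi(B))$; and applying $\Omega'$ and $(T\hat{J_n})_{(I,0)}$, with $J_n(A,B)=\left(\begin{smallmatrix}A&0\\ B&A\end{smallmatrix}\right)$, yields the block matrices of (\ref{proliealg}). On the other side, your curves are the right representatives: $t\mapsto(\exp(tX_i),0)$ is the tangent-lifted flow of $X_i$ applied to the zero vector at $e$, hence represents $X_i^c$ at $(e,0)$, while $t\mapsto(e,tX_i)$ is the defining ray of the vertical lift; differentiating $\widetilde{\Phi}(g,v)=\left(\begin{smallmatrix}\Phi(g)&0\\ \Phi_{*g}(v)&\Phi(g)\end{smallmatrix}\right)$ along them with $\phi=\Phi_{*e}$ gives the same two matrices. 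Two small points for a write-up: the block formula for $\widetilde{\Phi}$ is a fact from \cite{Myarticle} and should be cited or rederived in one line from $\hat{J_n}\circ T\Phi$; and since (\ref{proliealg}) is itself stated in the paper without proof, your direct computation of $(T\hat{J_n})_{(I,0)}\circ\Omega'\circ T\phi\circ\Omega^{-1}$ on the basis proves that formula as well, so your argument is self-contained and does not rest on an unproved assertion.
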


\section{Conclusion}

\hspace{5mm} In this paper, we have studied prolongations of real representations of Lie algebras. In particular, we have obtained representations of Lie algebras of tangent bundles of Lie groups. First, we have shown that a tangent bundle of a given Lie algebra attains a Lie algebra structure. Then, we have proven that this tangent bundle is algebraically isomorphic to a Lie algebra of the tangent bundle of corresponding Lie group. Using these, we have defined prolongations of representations of Lie algebras. We have also studied the relationship between the prolongations studied here and the prolongations presented in \cite{Myarticle}.

\section{Appendix}

\hspace{5mm} Below, we present some equations/identities that are used in section 3.

 \begin{proposition}\label{newprop1} For all $ a,b,c\in G$ , $\lambda \in \mathbb{R}$, $ X_a,Y_b,Z_c \in T(Lie(G))$, we have following formulas:
\begin{eqnarray}
\varphi \circ f_a &=& \sigma_{-1} \circ \varphi \circ \bar{f}_a\label{eq1} \\
\varphi \circ \bar{f}_b &=& \sigma_{-1} \circ \varphi \circ f_b\label{eq2} \\
\varphi \circ \bar{f}_b \circ \sigma_{\lambda} &=& \sigma_{\lambda} \circ \varphi \circ \bar{f_b} \label{eq3} \\
\varphi \circ f_{\lambda a} &=& \sigma_{\lambda}\circ \varphi \circ f_a \label{eq4} \\
\varphi \circ \bar{f}_c \circ \tau_b &=& \tau_{\varphi(b,c)} \circ \varphi \circ \bar{f}_c \label{eq5} \\
T(\varphi \circ f_a)(Z_c)\oplus T(\varphi \circ f_b)(Z_c) &=& T(\varphi \circ f_{a+b})(Z_c) \label{eq6} 
\end{eqnarray}
\begin{eqnarray*}
T(\tau_{\varphi(\varphi(b,c),a)} \circ \varphi \circ \bar{f}_c \circ \varphi \circ \bar{f}_b+ \tau_{\varphi(\varphi(a,b),c)}\circ \varphi \circ f_{\varphi(b,c)})(X_a)
\end{eqnarray*}
\begin{eqnarray}
=T(\sigma_{-1} \circ \varphi \circ \bar{f}_b \circ \varphi \circ f_c)(X_a)\label{eq7}
\end{eqnarray}
\begin{eqnarray*}
T(\tau_{\varphi(\varphi(b,c),a)} \circ \varphi \circ \bar{f}_c \circ \varphi \circ f_a+\tau_{\varphi(\varphi(a,b),c)} \circ \bar{f}_a \circ \varphi \circ f_b)(Y_b)
\end{eqnarray*}
\begin{eqnarray}
=T(\sigma_{-1} \circ \varphi \circ \bar{f}_{\varphi(c,a)})(Y_b)\label{eq8}
\end{eqnarray}
\begin{eqnarray*}
T(\tau_{\varphi(\varphi(b,c),a)} \circ \varphi \circ f_{\varphi(a,b)}+\tau_{\varphi(\varphi(a,b),c)} \circ \varphi \circ \bar{f}_a \circ \varphi \circ f_b)(Z_c)
\end{eqnarray*}
\begin{eqnarray}
=T(\sigma_{-1} \circ \varphi \circ \bar{f}_b \circ \varphi \circ \bar{f}_a)(Z_c)\label{eq9}
\end{eqnarray} 
\end{proposition}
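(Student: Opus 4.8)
The plan is to verify the nine formulas in two batches, exploiting one structural observation throughout: every composite that occurs, such as $\varphi\circ f_a$ or $\varphi\circ\bar{f}_b$, is a \emph{linear} map $Lie(G)\to Lie(G)$, because fixing one argument of the bilinear bracket $\varphi$ leaves a linear map in the remaining slot. Consequently the tangent functor is transparent on these pieces: for a linear $L$ the tangent map $TL$ acts on a tangent vector $X_a=(a,v)$ by $TL(X_a)=(L(a),L(v))$, while for the affine translation $\tau_c$ the tangent map $T\tau_c$ shifts the base point by $c$ but acts as the identity on the principal part $v$. Moreover $T$ carries a pointwise sum of maps into the vector space $Lie(G)$ to the componentwise sum $\oplus$ on the tangent bundle. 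With these facts, (\ref{eq6})--(\ref{eq9}) reduce to linear identities on $Lie(G)$, exactly as the bracket-free (\ref{eq1})--(\ref{eq5}) do.

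First I would dispatch (\ref{eq1})--(\ref{eq5}), which are equalities of ordinary maps and need no tangent functor. Evaluating on an arbitrary $b\in Lie(G)$ and using $f_a(b)=(a,b)$, $\bar{f}_a(b)=(b,a)$, the antisymmetry $\varphi(a,b)=-\varphi(b,a)$ yields (\ref{eq1}) and (\ref{eq2}) at once. Homogeneity of the bracket in each slot, together with the fact that the scalar map $\sigma_\lambda$ factors through $\varphi$, gives (\ref{eq3}) and (\ref{eq4}). For (\ref{eq5}) I would compute $\varphi(\bar{f}_c(\tau_b(x)))=\varphi(x+b,c)=\varphi(x,c)+\varphi(b,c)$ and recognise the result as $\tau_{\varphi(b,c)}(\varphi(\bar{f}_c(x)))$; that is, additivity in the first slot turns the source translation $\tau_b$ into the target translation $\tau_{\varphi(b,c)}$.

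Next, (\ref{eq6}) follows from additivity of $\varphi$ in its first argument, $\varphi\circ f_a+\varphi\circ f_b=\varphi\circ f_{a+b}$ as maps; applying $T$ and using that $\oplus$ is the componentwise tangent-bundle sum gives the claim. The heart of the matter is (\ref{eq7})--(\ref{eq9}). For each I would strip off the translation prefactors $\tau_{\varphi(\varphi(\cdot,\cdot),\cdot)}$, which, being affine with identity linear part, affect only base points, and read off the principal part of each linear composite. For (\ref{eq7}) the two summands contribute $v\mapsto\varphi(\varphi(v,b),c)$ and $v\mapsto\varphi(\varphi(b,c),v)$, whose sum must match the principal part $v\mapsto-\varphi(\varphi(c,v),b)$ of the right-hand side; this is precisely the Jacobi identity for the triple $(v,b,c)$ rewritten with antisymmetry. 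The same pattern handles (\ref{eq8}) and (\ref{eq9}): in each case the two nested-bracket summands collapse, under the Jacobi identity for the relevant triple, to the single nested bracket on the right, with the sign recorded by $\sigma_{-1}$.

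The main obstacle is the bookkeeping in (\ref{eq7})--(\ref{eq9}): correctly expanding the doubly nested brackets, propagating signs through the $\sigma_{-1}$ factors and the antisymmetry rewrites, and keeping the translation contributions separate from the principal-part contributions so that the Jacobi collapse is applied to the right triple. Once the principal parts are matched as above, applying $T$ together with the identities already recorded in (\ref{eq1})--(\ref{eq5}) finishes each line, completing the proof of all nine formulas.
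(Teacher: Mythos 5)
Your proposal is correct and follows essentially the same route as the paper's own proof: the paper likewise verifies (\ref{eq1})--(\ref{eq5}) by pointwise evaluation using bilinearity and antisymmetry of $\varphi$, and establishes (\ref{eq6})--(\ref{eq9}) by observing that the translation terms such as $\varphi(b,c)$ and $\varphi(\varphi(b,c),a)$ are constants annihilated by differentiation, so that only the linear parts survive and are matched via additivity and the Jacobi identity for the relevant triple. Your coordinate-free reading of principal parts under the splitting $T(Lie(G))\cong Lie(G)\times Lie(G)$ is simply the intrinsic form of the paper's computation with the coordinate functions $x_i$ and partial derivatives $\frac{\partial}{\partial x_j}$, including the same silent restriction to principal parts in (\ref{eq7})--(\ref{eq9}).
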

\noindent where $\varphi $ represents the Lie bracket, $T\varphi$ represents the differential of $\varphi$ and $x_i$ represents coordinate functions of G.

\begin{proof} For all $x\in Lie(G)$, we have
\begin{eqnarray*}
(\varphi \circ f_a)(x) &=& \varphi(a,x)=\sigma_{-1}((\varphi \circ \bar{f}_a)(x))=\sigma_{-1}((\varphi \circ \bar{f}_a)(x))=(\sigma_{-1} \circ \varphi \circ \bar{f}_a)(x).                                     
\end{eqnarray*}
This proves (\ref{eq1}).
Proofs of (\ref{eq2})-(\ref{eq5}) can be easily shown by the similar way. Therefore, we focus on the rest of the proofs. 

Proof of (\ref{eq6}): Using the coordinate functions of $Lie(G)$, we have
\begin{eqnarray*}
(x_i(\tau_{\varphi(b,c)} \circ \varphi \circ f_a)+x_i(\tau_{\varphi(a,c)} \circ \varphi \circ f_b))(X)= x_i(\varphi(b,c)+\varphi(a,X)+\varphi(a,c)+\varphi(b,X))
\end{eqnarray*} 
\noindent for all $X\in Lie(G)$. Since $\varphi(b,c) $ and $\varphi(a,c)$ are constants, we have 

\begin{equation}
\frac{\partial(x_i(\tau_{\varphi(b,c)} \circ \varphi \circ f_a)+x_i(\tau_{\varphi(a,c)} \circ \varphi \circ f_b)}{\partial x_j}|_c=
 \frac{\partial(x_i \circ \varphi \circ f_{a+b})}{x_j}|_c\label{in.eq}.\\
\end{equation}

\noindent Using $Z_c=\displaystyle\sum_{j=1}^m z_j \frac{\partial}{\partial x_j}|_c$ and (\ref {in.eq}), we have
\begin{eqnarray}
(T(\varphi \circ f_a)(Z_c) \oplus T(\varphi \circ f_b)(Z_c))[x_i]&=&(T\tau_{\varphi(b,c)}(T(\varphi \circ f_a)(Z_c))+T\tau_{\varphi(a,c)}(T(\varphi \circ f_b)(Z_c)))[x_i]\nonumber \\
&=&\displaystyle\sum_{j=1}^m z_j \frac{\partial(x_i \circ \tau_{\varphi (b,c)} \circ \varphi \circ f_a+x_i \circ \tau_{\varphi(a,c)} \circ \varphi \circ f_b)}{\partial x_j}|_c \nonumber \\
%&=&\displaystyle\sum_{j=1}^m z_j \frac{\partial(x_i (\tau_{\varphi (b,c)} \circ \varphi \circ f_a+ \tau_{\varphi(a,c)} \circ \varphi \circ f_b))}{\partial x_j}|_c \nonumber \\
&=&\displaystyle\sum_{j=1}^m z_j \frac{\partial(x_i \circ \varphi \circ f_{a+b})}{x_j}|_c \nonumber \\
&=&(T(\varphi \circ f_{a+b})(Z_c))[x_i].\label{in.eq2}
\end{eqnarray}
This completes the proof.

Proof of (\ref{eq7}): For all $\ X\in Lie(G)$, we have
\begin{eqnarray}
(x_i \circ \tau_{\varphi(\varphi(b,c),a)} \circ \varphi \circ \bar{f}_c \circ \varphi \circ \bar{f}_b+ x_i \circ \tau_{\varphi(\varphi(a,b),c)} \circ \varphi \circ f_{\varphi(b,c)})(X)\nonumber \\
=x_i(\varphi (\varphi (b,c),a)+\varphi (\varphi (X,b),c)+\varphi (\varphi (a,b),c)+\varphi (\varphi (b,c),X)).\label{hkk1} 
\end{eqnarray} 
Then the differential of (\ref{hkk1})
\begin{eqnarray}
\frac{\partial x_i ( \tau_{\varphi(\varphi(b,c),a)} \circ \varphi \circ \bar{f}_c \circ \varphi \circ \bar{f}_b+ \tau_{\varphi(\varphi(a,b),c)} \circ \varphi \circ f_{\varphi(b,c)})}{\partial x_j} 
=\frac{\partial(x_i \circ \sigma_{-1} \circ \varphi \circ \bar{f}_b \circ \varphi \circ f_c)}{\partial x_j}\nonumber.
\end{eqnarray}
Using (\ref{hkk1}) and above equation, we finish the proof as follows
\begin{eqnarray*}
&T(\tau_{\varphi(\varphi(b,c),a)} \circ \varphi \circ \bar{f}_c \circ \varphi \circ \bar{f}_b+ \tau_{\varphi(\varphi(a,b),c)}\circ \varphi \circ f_{\varphi(b,c)})(X_a)[x_i]\\
%&=&X_a [x_i \circ \tau_{\varphi(\varphi(b,c),a)} \circ \varphi \circ \bar{f}_c \circ \varphi \circ \bar{f}_b+ \tau_{\varphi(\varphi(a,b),c)}\circ \varphi \circ f_{\varphi(b,c)}]\\
=&\displaystyle\sum_{j=1}^m X_j \frac{\partial(x_i \circ \tau_{\varphi(\varphi(b,c),a)} \circ \varphi \circ \bar{f}_c \circ \varphi \circ \bar{f}_b+ \tau_{\varphi(\varphi(a,b),c)} \circ \varphi \circ f_{\varphi(b,c)})}{\partial x_j}|_a\\
=&\displaystyle\sum_{j=1}^m X_j \frac{\partial(x_i \circ \sigma_{-1} \circ \varphi \circ  \bar{f}_b \circ \varphi \circ f_c)}{\partial x_j}|_a\\
=&T(\sigma_{-1} \circ \varphi \circ \bar{f}_b \circ \varphi \circ f_c)(X_a)[x_i].
\end{eqnarray*}
Proof of (\ref{eq8}) and (\ref{eq9}) can be similarly performed.
\end{proof}  

\begin{lemma}\label{newlem3} Let $C_{ij}^k$ be structure constants of $G$, then we have:
\begin{eqnarray}
\frac{\partial (x_k \circ \varphi \circ f_{\bar{X}})}{\partial x_j}|_{\bar{Y}}=\bar{a}_iC_{ij}^{k}\label{eq11}\hspace{3mm}and\hspace{3mm}
\frac{\partial (x_k \circ \varphi \circ \bar{f}_{\bar{Y}})}{\partial x_j}|_{\bar{X}}= \bar{b}_i {C}_{ji}^{k}\label{eq12}
\end{eqnarray}
where $\bar{X}=\displaystyle\sum_{i=1}^m \bar{a}_i X_i$ and $\bar{Y}=\displaystyle\sum_{i=1}^m \bar{b}_i X_i.$
\end{lemma}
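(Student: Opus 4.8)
The plan is to reduce both identities to a single coordinate computation that uses only the bilinearity of the Lie bracket $\varphi$ and the linearity of the coordinate functions $x_k$. First I would unwind the definitions already fixed in the proof of Proposition~\ref{newprop2}: recall that $f_{\bar X}(b)=(\bar X,b)$ and $\bar f_{\bar Y}(b)=(b,\bar Y)$, so that $\varphi\circ f_{\bar X}$ is the map $b\mapsto\varphi(\bar X,b)=[\bar X,b]$ and $\varphi\circ\bar f_{\bar Y}$ is the map $b\mapsto\varphi(b,\bar Y)=[b,\bar Y]$; and recall that the structure constants are defined by $[X_i,X_j]=\sum_k C_{ij}^k X_k$. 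The key observation is that both composites are \emph{linear} in their argument, so each coordinate component $x_k\circ\varphi\circ f_{\bar X}$ (resp. $x_k\circ\varphi\circ\bar f_{\bar Y}$) is a linear function of the coordinates, whence its partial derivatives are constant and the choice of evaluation point is immaterial.

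For the first identity, I would write a general point as $b=\sum_l x_l X_l$, so that $x_l$ is literally its $l$-th coordinate. Expanding bilinearly and applying $x_k$ gives
\begin{equation*}
(x_k\circ\varphi\circ f_{\bar X})(b)=x_k\Big(\sum_{i,l}\bar a_i\,x_l\,[X_i,X_l]\Big)=\sum_{i,l}\bar a_i\,x_l\,C_{il}^k .
\end{equation*}
Differentiating in $x_j$ selects the term $l=j$ and yields $\sum_i\bar a_i C_{ij}^k$; since this is constant, evaluating at $\bar Y$ leaves it unchanged, which is exactly $\bar a_i C_{ij}^k$ in the paper's summation convention.

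The second identity is handled in precisely the same way, the only subtlety being the order of the arguments of $\varphi$. Here $\bar Y$ occupies the \emph{second} slot, so expanding $[a,\bar Y]$ with $a=\sum_l x_l X_l$ produces
\begin{equation*}
(x_k\circ\varphi\circ\bar f_{\bar Y})(a)=x_k\Big(\sum_{i,l}x_l\,\bar b_i\,[X_l,X_i]\Big)=\sum_{i,l}x_l\,\bar b_i\,C_{li}^k ,
\end{equation*}
and differentiating in $x_j$ (then evaluating at the constant point $\bar X$) gives $\bar b_i C_{ji}^k$, with the summed index $i$ now sitting in the second lower slot of $C$.

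I expect no genuine obstacle here: linearity of the two composites makes the derivatives constant, so there is nothing to estimate and the evaluation points $\bar Y$ and $\bar X$ play no role beyond matching the form needed in Proposition~\ref{newprop4}. The one thing that requires care is the index bookkeeping — tracking that whether the fixed element $\bar X$ or $\bar Y$ sits in the first or the second argument of the bracket is exactly what forces the summed index into the first versus the second lower position of the structure constant, and hence what distinguishes $C_{ij}^k$ from $C_{ji}^k$ in the two formulas.
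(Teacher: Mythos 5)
Your proposal is correct and follows essentially the same route as the paper: expand the argument in coordinates, use bilinearity of $\varphi$ to see that $x_k\circ\varphi\circ f_{\bar X}$ (resp.\ $x_k\circ\varphi\circ\bar f_{\bar Y}$) equals the linear expression $\bar a_i x_t C^k_{it}$ (resp.\ $x_t\bar b_i C^k_{ti}$), and differentiate in $x_j$. The only difference is presentational: the paper states this computation for the first identity and dismisses the second as ``similar,'' whereas you work out both, correctly tracking that the fixed element's slot in the bracket is what places the summed index in the first versus the second lower position of $C^k_{ij}$.
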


\begin{proof} Using $(x_k\circ \varphi \circ f_{\bar{X}})(X)=\bar{a}_ix_tC^k_{it}$  for all $X\in Lie(G)$, we show
\begin{equation*}
\frac{\partial(x_k \circ \varphi \circ f_{\bar{X}})}{\partial x_j}|_{\bar{Y}}=\frac{\partial(\bar{a}_ix_tC^k_{it})}{\partial{x_j}}=\bar{a}_i C^k_{ij}.
\end{equation*}
Proof of Eq. (\ref{eq12}) can be done similarly.
\end{proof}

\bigskip

\end{document}